\theoremstyle{plain}
\newtheorem{theorem}{Theorem}[section]
\newtheorem{lemma}[theorem]{Lemma}
\newtheorem{corollary}[theorem]{Corollary}
\theoremstyle{definition}
\newcommand{\li}{\text{li}}
\theoremstyle{remark}
\title{\textbf{A Note on Algorithms for Computing $p_n$}}
\author{
Ansh Aggarwal\\
University of Wisconsin–Madison\\
\texttt{ansh@cs.wisc.edu}
}
\date{}
\begin{document}

\maketitle

\begin{abstract}
We analyze algorithms for computing the $n$th prime $p_n$ and establish asymptotic bounds for several approaches. Using existing results on the complexity of evaluating the prime-counting function $\pi(x)$, we show that the binary search approach computes $p_n$ in $O(\sqrt{n} \, (\log n)^4)$ time. Assuming the Riemann Hypothesis and Cramér's conjecture, we construct a tighter interval around $\li^{-1}(n)$, leading to an improved sieve-based algorithm running in $O(\sqrt{n} \, (\log  ^{7/2} n) \, \log \log n)$ time. This improvement, though conditional, suggests that further refinements to prime gap estimates may yield provably faster methods for computing primes.
\end{abstract}

\section{Introduction}

Finding the $n$th prime, denoted $p_n$, given $n$ is an important problem \cite{bachshallit1996}. It is known that an oracle for $\pi(x)$, the prime counting function, allows one to simply run binary search to compute $p_n$ with at most $\log _2 n$ calls to $\pi(x)$ \cite{bachshallit1996}.

 First, given recent developments, we present a simple analysis for the explicit run time for this binary search approach. Notably, we look at Pierre Dusart's bounds for $p_n$ \cite{dusart2010} and look at the fastest known method of computing $\pi(x)$ \cite{hirsch2023}. Here, we conclude that $p_n$ can be found, given $n$, in $O(\sqrt{n} \cdot \log  ^4 n)$ time. 

 Next, we consider sieving to find $p_n$. We show that the fastest sieve, which is a sublinear sieve \cite{sorenson1990}, cannot beat the binary search algorithm asymptotically. Instead, we consider a sieve that finds primes over an interval $[R, L]$ using a segmented sieve. Ultimately, we show that for such a sieve to beat binary search, it must have an interval tighter than some $B$. A look at the literature tells us that an easy bound for $p_n$ that is sufficiently tight does not exist. 

 Finally, we assume the Riemann hypothesis and Cramér's conjecture on prime gaps to show a way to construct an interval tight enough that an interval sieve would asymptotically beat binary search. Thus based on this result, we construct a new algorithm that beats binary search to find $p_n$ given $n$. This algorithm runs in $O(\sqrt{n} \cdot (\log ^{7/2} n) \cdot \log \log n)$ time. 

 In this document, $\log  $ refers taking logarithm with base $e$. We generally denote logarithms in big-$O$ notation with $\log  $. 

\section{Bounds for \texorpdfstring{$p_n$}{pn}}

In \cite{dusart2010}, Pierre Dusart provides bounds for $p_n$, namely, for $n \geq 6$: \begin{align}
    L = n (\log n+ \log (\log n) - 1) < p_n < n(\log n + \log (\log  n)) = R
\end{align} This has the immediate consequence that $p_n$ is always contained in interval of size $n$. \begin{corollary}
    $R - L = n(\log n + \log (\log  n))  -  n (\log n+ \log (\log n) - 1) = n$
\end{corollary}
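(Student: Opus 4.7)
The statement is a direct algebraic consequence of the two-sided bound displayed in equation (1), so my plan is simply to subtract the lower bound from the upper bound and collect like terms. Specifically, I would write
\[
R - L \;=\; n\bigl(\log n + \log(\log n)\bigr) \;-\; n\bigl(\log n + \log(\log n) - 1\bigr),
\]
distribute the $n$, and observe that the $n\log n$ terms cancel, the $n\log(\log n)$ terms cancel, and the only surviving contribution is $n \cdot 1 = n$.

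There is no real obstacle here: Dusart's inequality is being taken as a black box from \cite{dusart2010}, and the claim is purely a length calculation for the interval $[L,R]$. The only thing worth being careful about is making clear that this identity holds for every $n \ge 6$ (the range in which Dusart's bounds are valid), so that the interpretive statement "$p_n$ is always contained in an interval of size $n$" is justified in the regime we care about. Asymptotically, this is exactly what we need later: it tells us that naive sieving on $[L,R]$ processes $\Theta(n)$ integers, which is the benchmark that the subsequent improved algorithms must beat.

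I would present the proof as a single displayed line of algebra followed by one sentence noting the cancellation, since anything longer would obscure how trivial the computation really is.
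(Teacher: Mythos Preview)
Your proposal is correct and matches the paper's treatment exactly: the corollary is stated as a one-line computation with no separate proof, since the cancellation of the $n\log n$ and $n\log(\log n)$ terms is immediate. There is nothing to add.
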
 For $n \geq 688383$, Dusart gives a more refined interval of size $\frac{n}{10 \cdot \log n}$. However, since we hope to use this interval with binary search, we show that this improvement does not yield an asymptotic improvement for computing $p_n$. \begin{lemma} 
    Improving the search interval for binary search by a $\log$ factor does not improve the asymptotic run time. 
\end{lemma}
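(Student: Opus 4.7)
The plan is to show that the number of binary search iterations is $\Theta(\log n)$ regardless of whether the interval has length $n$ or $n/\log n$, so the total runtime (iterations times cost of one $\pi(x)$ evaluation) is asymptotically unchanged.

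First I would recall the standard binary search cost model: on an interval of integer length $W$, binary search performs $\lceil \log_2 W \rceil$ comparisons, each of which here is one evaluation of $\pi(x)$ at a query point in $[L,R]$. Thus the total time is $\Theta(\log W) \cdot T_\pi(x_{\max})$, where $T_\pi$ is the cost of a single $\pi$ evaluation and $x_{\max}\le R$.

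Next I would compute $\log W$ in each case. With the size-$n$ interval from the corollary, $\log_2 W = \log_2 n$. With the refined size-$\tfrac{n}{10\log n}$ interval, $\log_2 W = \log_2 n - \log_2\log n - \log_2 10 = \log_2 n \cdot \bigl(1 - o(1)\bigr)$. Hence in both cases the iteration count is $\Theta(\log n)$, and the dominant factor $\log n$ in the total runtime is preserved. Combined with the fact that $T_\pi$ depends only on the query magnitude, which is at most $R = O(n \log n)$ in either case and so contributes the same $\sqrt{n}\,\mathrm{polylog}(n)$ factor, the overall asymptotic runtime of binary search is identical up to constants.

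There is no real obstacle here; the content is simply the observation that subtracting a $\log\log n$ term from $\log n$ is absorbed by the $\Theta$. I would close by noting the moral consequence the paper wants: to asymptotically beat the binary-search approach one must shrink the search window by a factor that changes $\log W$ by more than a constant, e.g.\ by a $\mathrm{polylog}(n)$ factor or better, which motivates the subsequent sections on interval sieving and on the tighter intervals obtainable under RH and Cramér's conjecture.
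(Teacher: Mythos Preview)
Your proposal is correct and follows essentially the same approach as the paper: both arguments reduce to the observation that $\log(n/\log n) = \log n - \log\log n - O(1) = \log n\,(1-o(1))$, which the paper phrases as the limit $\lim_{n\to\infty}\log(n/\log n)/\log n = 1$. Your framing with the binary-search cost model and the remark that $T_\pi$ is unaffected is more explicit than the paper's version, but the mathematical content is the same.
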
 \begin{proof}
    We want to show $O(\log n) = O(\log (n/ \log n))$.  Consider the following limit: \begin{align}
        \lim_{n \to \infty}\frac{\log  (n / \log n)}{\log  n} = \lim_{n \to \infty}\frac{\log  n}{\log  n} - \lim_{n \to \infty}\frac{\log  \log n}{\log  n} = 1 - 0 = 1
    \end{align} Thus, under binary search, the bounds for $p_n$ give the same asymptotic result. 
\end{proof}

\section{Using \texorpdfstring{$\pi(x)$}{pi(x)} to Compute \texorpdfstring{$p_n$}{pn}}

The prime counting function, $\pi(x)$, returns the number of prime numbers smaller than or equal to $x$. Formally, this may be written \cite{bachshallit1996} as: \begin{align}
    \pi(x) = \sum_{p \leq x} 1
\end{align} This, however, is not an efficient method to compute $\pi(x)$. Instead, there are two major efficient approaches to computing $\pi(x)$, both of which, in their best forms, run in $\tilde{O}(\sqrt{x})$ time. In \cite{lagarias1987}, Lagarias and Odlyzko present an analytic method to compute $\pi(x)$, assuming the Riemann hypothesis holds true however, the authors note that despite its asymptotic result, it might be impractical to use due to the implied constants, leaving the combinatorial approach as the only real option. 

 In \cite{hirsch2023}, Hirsch, Kessler, and Mendlovic improve to $\tilde{O}(\sqrt{x})$, explicitly $O(\sqrt{x} \cdot  (\log x)^{5/2})$ due to proposition 16 in their paper, with previous notable results attributed Lagarias, Miller, and Oldyzko \cite{lagarias1985} and Rivat and Deléglise \cite{deleglise1996}. 

 In \cite{bachshallit1996}, the authors prove that $p_n$ can be computed quickly given an oracle for $\pi(x)$ using binary search. This is done by finding the smallest $x$ such that $\pi(x) = n$ because all $x' > x$ such that $\pi(x') = n$ are necessarily composite. Let $T_{\pi}$ denote the time complexity of evaluating $\pi(x)$. \begin{theorem}
    Using an oracle for $\pi(x)$, the $n$th prime is computed in $O(T_{\pi} \log n)$ time. 
\end{theorem}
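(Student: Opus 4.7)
The plan is to run binary search on Dusart's interval from Section 2, using $\pi(x)$ as the comparison oracle, and then bound the total cost by the number of queries times $T_\pi$.

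First, I would fix the search interval. By Dusart's bounds, for $n \geq 6$ we have $p_n \in [L, R]$ with $R - L = n$ by the corollary in Section 2. For the handful of small cases $n < 6$, $p_n$ is a constant and can be returned in $O(1)$ time, so without loss of generality the search is over an integer interval of length $n$. The search target is the smallest integer $x$ with $\pi(x) \geq n$; since $\pi$ is a nondecreasing step function that jumps by $1$ exactly at primes, this smallest $x$ is precisely $p_n$ (any larger $x'$ with $\pi(x') = n$ is composite, as noted in the excerpt).

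Next, I would spell out the binary search. Maintain endpoints $\ell, r$ initialized to $\lceil L \rceil, \lfloor R \rfloor$. At each step compute the midpoint $m = \lfloor (\ell + r)/2 \rfloor$, invoke the oracle to obtain $\pi(m)$, and update: if $\pi(m) \geq n$ set $r \gets m$, otherwise set $\ell \gets m + 1$. Standard correctness of binary search for the smallest index satisfying a monotone predicate then guarantees that after the loop $\ell = r = p_n$.

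Finally, for the complexity, the interval has length $n$, so the loop terminates after $\lceil \log_2 n \rceil = O(\log n)$ iterations. Each iteration performs one oracle call costing $T_\pi$, plus $O(\log n)$ bit operations for the midpoint arithmetic and the comparison with $n$; since any nontrivial algorithm for $\pi$ satisfies $T_\pi = \Omega(\log n)$, the per-iteration cost is $O(T_\pi)$, yielding a total of $O(T_\pi \log n)$.

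I do not expect a serious obstacle; the only subtle point is justifying that the smallest $x$ with $\pi(x) \geq n$ equals $p_n$, which follows from the jump structure of $\pi$. The computation of the interval endpoints $L, R$ involves evaluating $\log n$ and $\log\log n$, but to the precision needed for bracketing an integer this costs only $O(\mathrm{polylog}\, n)$ and is absorbed into the stated bound.
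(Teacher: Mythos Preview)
Your proposal is correct and follows essentially the same approach as the paper: binary search over Dusart's length-$n$ interval $[L,R]$ from Section~2, using $\pi$ as the comparison oracle, for $O(\log n)$ oracle calls and hence $O(T_\pi \log n)$ total time. You supply more detail than the paper does (handling $n<6$, the explicit invariant and termination argument, and the absorption of arithmetic overhead into $T_\pi$), but the core idea is identical.
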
  \begin{proof}
    From \textbf{Section 2}, $[L, R]$ defines an interval that contains $p_n$ and it suffices to run binary search over it. The length of the interval is $n \implies$ up to $\log _2 n$ evaluations of $\pi(x)$ are required, thus we have $O(T_{\pi}\log n)$ time. Remark: The change of base from $\log _2$ to $\log  $ is allowed as it does not affect the asymptotic result but allows for cleaner simplifications in the proofs that follow
\end{proof} 

  Thus, using the algorithm from \cite{hirsch2023}, we obtain a more refined result. 

\begin{lemma}
    We can compute $p_n$ in $O(\sqrt{n} \cdot (\log n)^4)$ time.
\end{lemma}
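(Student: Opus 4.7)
The plan is to combine the previous theorem with the explicit complexity of Hirsch--Kessler--Mendlovic for $\pi(x)$ and the upper bound $R$ from Dusart. First I would invoke the previous theorem to reduce the task to bounding $T_\pi \cdot \log n$, where $T_\pi$ is the cost of a single evaluation of $\pi(x)$ on inputs $x \le R$.

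Next I would substitute the Hirsch--Kessler--Mendlovic bound $T_\pi = O(\sqrt{x} \cdot (\log x)^{5/2})$ and apply it at $x = R = n(\log n + \log \log n)$. The key simplifications are $\sqrt{R} = O(\sqrt{n \log n}) = O(\sqrt{n} \cdot \sqrt{\log n})$ and $\log R = \log n + \log \log n + O(1) = \Theta(\log n)$, so $(\log R)^{5/2} = O((\log n)^{5/2})$. Multiplying these together gives
\begin{align*}
T_\pi = O\bigl(\sqrt{n} \cdot \sqrt{\log n} \cdot (\log n)^{5/2}\bigr) = O\bigl(\sqrt{n} \cdot (\log n)^3\bigr).
\end{align*}

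Finally I would multiply by the $\log n$ factor from the binary search to obtain $O(\sqrt{n} \cdot (\log n)^4)$. I would also note that binary search queries $\pi(x)$ only on values $x \in [L, R]$, so applying the bound at the endpoint $R$ is a valid upper bound for every query.

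The main (and only) obstacle is just careful bookkeeping: making sure that the $\sqrt{\log n}$ absorbed from $\sqrt{R}$ combines cleanly with the $(\log R)^{5/2}$ factor, and that the change from $\log_2$ to natural log in the binary search step is absorbed into the big-$O$ constant as noted in the remark following the previous theorem. No deep ideas are needed; the statement is essentially an arithmetic consequence of the two cited bounds.
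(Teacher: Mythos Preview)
Your proposal is correct and follows essentially the same route as the paper: invoke the previous theorem, plug in the Hirsch--Kessler--Mendlovic bound at $x$ of order $n\log n$, simplify $\sqrt{x}\,(\log x)^{5/2}$ to $O(\sqrt{n}\,(\log n)^3)$, and multiply by the extra $\log n$ from binary search. The only cosmetic difference is that you bound $x$ by Dusart's explicit $R = n(\log n + \log\log n)$ whereas the paper writes $x \sim n\log n$ via the Prime Number Theorem; both give the same simplification.
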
 \begin{proof} Using the Prime Number Theorem, it is trivial to note that $O(\sqrt{x} \cdot (\log x)^{5/2}) \sim O(\sqrt{n \log n} \cdot (\log (n \log n))^{5/2}) = T_{\pi}$ thus we have: \begin{align}
        O(\sqrt{n \log n} \cdot (\log (n \log n))^{5/2}) \\
        \implies T_{\pi} = O(\sqrt{n} \cdot (\log n)^{3})
    \end{align} Following \textbf{Theorem 3.1}, we conclude that $p_n$ is computed in $O(\sqrt{n} \cdot (\log n)^3 \cdot \log n) \implies O(\sqrt{n} \cdot (\log n)^4)$. 
\end{proof} 

\section{Sublinear Sieves}

In \cite{sorenson1990}, Sorenson presented a sublinear sieve that built on Pritchard's work in \cite{pritchard1981, pritchard1987}. A sieve produces all primes up to some bound $M$; thus, to compute $p_n$, it suffices to sieve up to an upper bound, like $R$ from \textbf{Section 2.1} and counting to $n$th number in the list. With $R$ as the upper bound, the sublinear sieve runs in $O(R / \log \log R)$ time. 
\begin{lemma}
    A sublinear sieve takes $O( (n \log n) / \log \log n)$ time to compute $p_n$.
\end{lemma}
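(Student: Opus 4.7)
The plan is to substitute the upper bound $R = n(\log n + \log\log n)$ from Section 2 into the sieve complexity $O(R/\log\log R)$ and simplify. The only mildly non-trivial point is confirming that the $\log\log R$ in the denominator behaves like $\log\log n$ asymptotically.

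First I would bound the numerator: since $\log\log n = o(\log n)$, we have $R = n(\log n + \log\log n) = O(n \log n)$, and in fact $R = \Theta(n \log n)$.

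Next I would handle the denominator. Taking logarithms of $R = \Theta(n \log n)$ gives $\log R = \log n + \log\log n + O(1) = \Theta(\log n)$, and taking one more log yields
\[
\log \log R = \log\bigl(\Theta(\log n)\bigr) = \log\log n + O(1) = \Theta(\log\log n).
\]
Combining these two estimates,
\[
\frac{R}{\log\log R} \;=\; \Theta\!\left(\frac{n \log n}{\log\log n}\right),
\]
so the sieve runs in $O(n\log n / \log\log n)$ time on the interval $[1,R]$, and scanning the resulting list to pick off the $n$th entry adds only $O(R) = O(n\log n)$, which is absorbed. This yields the claimed bound.

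The main (and only) obstacle is the routine verification that $\log\log R \sim \log\log n$, which is immediate once one writes $\log R = \log n + \log\log n + O(1)$; no deeper fact about $\pi$ or Dusart's bound is needed beyond the interval containment established in Section 2.
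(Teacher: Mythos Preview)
Your approach is essentially the paper's: substitute $R = n(\log n + \log\log n)$ into $O(R/\log\log R)$, bound numerator and denominator separately, then add the cost of counting. Your treatment of the denominator via $\log R = \Theta(\log n)$ and hence $\log\log R = \Theta(\log\log n)$ is in fact tidier than the paper's, which leaves the iterated logarithm only partially simplified before asserting the containment in $O(n\log n/\log\log n)$.

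There is one slip at the end: you write that scanning the resulting list costs $O(R) = O(n\log n)$ and that this is ``absorbed'', but $n\log n$ is asymptotically \emph{larger} than $n\log n/\log\log n$, so it cannot be absorbed. The paper handles this step by observing that the sieve outputs a list of primes with $\pi(R) = O(n)$ entries, so counting to the $n$th one costs $O(n)$, which is genuinely dominated by the sieve time. With that correction your argument is complete and matches the paper.
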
 \begin{proof}
    From \textbf{Section 2}, $R = n (\log n + \log (\log n))$ gives: \begin{align}
        O \left( \frac{n (\log n + \log (\log n))}{\log (\log n (\log n + \log (\log n))) } \right) \implies O \left( \frac{n \log n }{\log (\log n (\log n + \log (\log n))) }\right)
    \end{align} Let $T$ denote the expression on the right. Note that: \begin{align}
        T \subseteq O \left( \frac{n \log n}{\log \log n}\right)
    \end{align} Counting once sieving is completed takes $O(n)$ time thus, we have $O( (n \log n) / \log \log n)$. 
\end{proof}

\begin{theorem}
    A sublinear sieve is asymptotically worse than the binary search algorithm to compute $p_n$.
\end{theorem}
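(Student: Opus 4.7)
The plan is to compare the two running times directly: the binary search algorithm runs in $O(\sqrt{n}\,(\log n)^{4})$ by \textbf{Lemma 3.2}, while \textbf{Lemma 4.1} gives the sublinear sieve a running time of $O(n\log n / \log\log n)$. To show that the sieve is asymptotically worse, it suffices to show that the binary search running time is $o$ of the sieve running time, which reduces to a single limit computation.

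First I would form the ratio
\begin{align}
\frac{\sqrt{n}\,(\log n)^{4}}{n\log n / \log\log n} = \frac{(\log n)^{3}\,\log\log n}{\sqrt{n}}.
\end{align}
Then I would argue that this ratio tends to $0$ as $n \to \infty$, by appealing to the standard fact that any positive power of $n$ dominates any polynomial in $\log n$ (equivalently, one can substitute $n = e^{u}$ and note that $u^{3}\log u / e^{u/2} \to 0$). From this it follows that
\begin{align}
\sqrt{n}\,(\log n)^{4} = o\!\left(\frac{n \log n}{\log\log n}\right),
\end{align}
so the sublinear-sieve bound strictly dominates the binary-search bound.

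There is no real obstacle here, since both running times have already been established in the preceding lemmas; the content of the theorem is only the elementary comparison. The only thing to be slightly careful about is that we are comparing upper bounds, so strictly speaking we are only showing that the \emph{best bound we have proved} for the sublinear sieve is worse than the best bound we have proved for binary search; I would include a brief sentence noting that this is the natural reading of the claim, matching the framing in the introduction where the sieve cannot beat binary search \emph{asymptotically} in the bounds we can establish.
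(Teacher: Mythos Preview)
Your proposal is correct and matches the paper's approach essentially line for line: form the ratio $\frac{(\log n)^{3}\log\log n}{\sqrt{n}}$ and argue it tends to $0$. The only cosmetic difference is that the paper invokes L'H\^opital's rule where you cite the standard dominance of powers over logarithms; your added caveat about comparing upper bounds is a reasonable clarification the paper omits.
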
 \begin{proof}
    The theorem is true if the following limit converges to $0$: \begin{align}
         \lim_{n \to \infty} \sqrt{n} \cdot (\log n)^{4} \cdot \frac{\log \log n}{n \log n} = \lim_{n \to \infty} \frac{(\log n)^3 (\log \log n)}{\sqrt{n}}
    \end{align} Note that \begin{align}
         \frac{(\log n)^3 (\log \log n)}{\sqrt{n}} \leq \frac{(\log n)^3}{\sqrt{n}}
    \end{align} Thus, if the latter expression converges, so does the original limit, \begin{align}
        \lim_{n \to \infty} \frac{(\log n)^3}{n^{1/2}} = 0
    \end{align} We conclude that this converges to $0$ by a few applications of L'Hopital's rule. 
\end{proof} 

\section{Further Sieving Troubles}

Another way to find $p_n$ is to sieve over a sufficiently small range $[i, j]$ with a counter that is initialized at $\pi(i)$ and incremented for every prime in $[i, j]$. To sieve over such an interval, the best known method involves using segmented sieves that finds all primes up to $\sqrt{j}$ and uses them to mark primes in $[i, j]$ \cite{cpalgos}. Note that such an algorithm runs in $O((j - i + 1)\log \log j + \sqrt{j} \log  \log  \sqrt{j})$ time where $O(\sqrt{j} \log  \log  \sqrt{j})$ comes from needing to find all primes up to $\sqrt{j}$. We assume a stronger variant where the list of primes up to $\sqrt{j}$ is precomputed thus giving $O((j - i + 1) \log  \log j)$ as the time complexity. 

\begin{theorem}
    The size of $[i, j]$ must be less than $\sqrt{n} \cdot \frac{(\log n)^4}{\log \log n}$ (up to some constant) in order to beat the binary search algorithm asymptotically when attempting to find $p_n$.  
\end{theorem}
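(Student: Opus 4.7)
The plan is a direct running-time comparison. The segmented sieve on $[i,j]$ runs in $O((j-i+1)\log\log j)$ by the assumption preceding the theorem, whereas by Lemma 3.2 binary search runs in $O(\sqrt{n}\,(\log n)^4)$. For the sieve to beat binary search asymptotically, a necessary condition is
\[
(j - i + 1)\log\log j = O\!\left(\sqrt{n}\,(\log n)^4\right),
\]
and the theorem reduces to solving this for the interval size.

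First, I would reduce $\log\log j$ to $\log\log n$ in the regime of interest. Any interval sieve used to find $p_n$ must satisfy $p_n \in [i,j]$, and to have any hope of beating binary search, $j$ cannot exceed the Dusart upper bound $R = n(\log n + \log\log n)$ by more than a constant factor. Hence $j = O(n \log n)$, so $\log j = O(\log n)$, and therefore $\log\log j = O(\log\log n)$. Substituting, the sieve's running time simplifies to $O((j - i + 1)\log\log n)$.

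Next, I would set up the inequality $(j - i + 1)\log\log n = O(\sqrt{n}\,(\log n)^4)$ and solve for the interval length, obtaining
\[
j - i + 1 = O\!\left(\frac{\sqrt{n}\,(\log n)^4}{\log\log n}\right),
\]
which is exactly the bound claimed in the theorem, up to the multiplicative constant acknowledged in its statement.

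The argument is essentially bookkeeping, so I do not expect a significant obstacle. The only mildly delicate point is the equivalence $\log\log j \sim \log\log n$ throughout the relevant range of $j$; this follows because $\log(n\log n) = \log n + \log\log n = (\log n)(1 + o(1))$, and taking one more logarithm gives $\log\log(n\log n) = \log\log n + o(1)$. Everything else is routine substitution.
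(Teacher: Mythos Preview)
Your proposal is correct and follows essentially the same route as the paper: set $s=j-i+1$, replace $\log\log j$ by $\log\log n$ via $j\sim n\log n$, and solve $s\log\log n \le c\sqrt{n}(\log n)^4$ for $s$. You give a bit more justification for the $\log\log j\sim\log\log n$ step than the paper does, but the argument is otherwise identical.
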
 \begin{proof}
    First, we let $s = (j - i +1)$ and in order to isolate the dependence on $s$ instead of the upper bound $j$ alone, we let $j \sim n \log n$ giving the time complexity of the interval sieve as: \begin{align}
        O(s \log \log (n \log n)) = O(s \log \log n)
    \end{align}
    From the previous sections, we note that the best binary search algorithm runs in $O(\sqrt{n} \cdot (\log n)^4)$ thus, for the sieve to beat the algorithm, the following inequality must hold for some $s$ and some constant $c$: \begin{align}
    c\cdot \frac{s \log \log n}{\sqrt{n} \cdot (\log n)^4} <1 \implies s < \frac{1}{c} \cdot \frac{\sqrt{n} \cdot (\log n)^4}{\log \log n} = B
\end{align} If $|[i, j]| = s < B$, then a sieve over $[i, j]$ would be asymptotically faster than the binary search algorithm. 
\end{proof} 

 Dusart's bounds show that $p_n$ is found in an interval of size $O(n / \log n)$ as noted in \textbf{Section 2} \cite{dusart2010} and while, further work, notably by Christian Axler \cite{axler2018}, does improve bounds for $p_n$, asymptotically, they yield $O(n)$. As such, with current state of bounds for $p_n$ a segmented sieve over an interval cannot beat the binary search algorithm without further work.

\section{Using Schoenfeld's Bound}

A consequence of the Riemann hypothesis, as given by Lowell Schoenfeld \cite{schoenfeld1976}, is the following bound: \begin{align}
    |\pi(x)-\li(x)| \leq \frac{1}{8 \pi} \cdot\sqrt{x} \cdot \log x
\end{align} Where $\li(x)$ is the logarithmic integral function. A consequence of this result given in the following corollary: 

\begin{corollary} For $k = \frac{1}{8 \pi}$, \begin{align}
    \pi(x) \in I = [\li (x) - k\sqrt{x} \log x, \li(x) + k\sqrt{x} \log x]
\end{align}
\end{corollary}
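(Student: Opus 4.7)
The plan is to unpack the absolute value in Schoenfeld's bound directly. The statement $|\pi(x) - \li(x)| \leq k \sqrt{x} \log x$ with $k = \frac{1}{8\pi}$ is equivalent, by the standard definition of absolute value, to the conjunction
\[
-k\sqrt{x}\log x \;\leq\; \pi(x) - \li(x) \;\leq\; k\sqrt{x}\log x.
\]
So first I would cite the displayed Schoenfeld bound from the paragraph immediately preceding the corollary, then split it into these two one-sided inequalities.

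Next, I would add $\li(x)$ to all three parts of the resulting chain of inequalities, obtaining
\[
\li(x) - k\sqrt{x}\log x \;\leq\; \pi(x) \;\leq\; \li(x) + k\sqrt{x}\log x,
\]
which is exactly the assertion that $\pi(x) \in I$ with $I$ as defined in the corollary statement. No further analytic input is required, since both endpoints of $I$ are finite real numbers for $x > 1$ (where $\li(x)$ is well-defined in the sense used elsewhere in the paper, taking the Cauchy principal value when necessary).

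The only even mildly subtle point, and hence the ``main obstacle'' if one wants to call it that, is a notational one: ensuring that the $\li(x)$ appearing in Schoenfeld's bound and the $\li(x)$ appearing in the corollary refer to the same function (the logarithmic integral), and that $k$ is instantiated consistently as $\frac{1}{8\pi}$ on both sides. I would make this explicit in one short sentence so that later sections, which will manipulate $I$ to derive prime gap estimates, can quote the corollary without ambiguity. Given that the corollary is essentially a restatement, the proof proposal is really just: \emph{expand $|\cdot|$, add $\li(x)$, done.}
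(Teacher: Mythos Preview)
Your proposal is correct and matches the paper's treatment: the paper states the corollary as an immediate consequence of Schoenfeld's bound without a separate proof, and your unpacking of the absolute value into the two-sided inequality is exactly the (trivial) step being invoked.
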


 For an $\alpha$ such that $\li(\alpha) = n$, we have that $\pi(\alpha) = n' \in I$. Consider the two possibilities, if $n' > n$, then $p_n < \alpha$ and if $n' < n$, then $p_n > \alpha$. 

 Unlike $\pi(x)$, $\li(x)$ is easy to approximately compute using the following expansion: \begin{align}
    \li(x) \sim  \frac{x}{\log x} \sum_{k = 0}^{\infty} \frac{k!}{(\log x)^k}
\end{align} This means for any evaluation of $\li(x)$ with precision $\varepsilon$, the time complexity is $O(\log (1 / \varepsilon)) \implies O(1)$, constant time. As such, using $\li(x)$ almost as a `proxy' for $\pi(x)$ by evaluating $\li(x)$ $\log _2 n$ times to get $\alpha$ such that $\li(\alpha) = n$, done entirely in $O(\log _2 n) \implies O(\log n)$ time. The obstacle that remains is going from $\alpha$ to $p_n$ however, here we may consider Cramér's conjecture \cite{granville1995}: \begin{align}
    p_{n + 1} - p_n \in O( (\log  p_n)^2)
\end{align} A natural extension of this conjecture is given for larger gaps, \begin{align}
    p_{n + \Delta} - p_n \in O( \Delta (\log p_{n + \Delta })^2)
\end{align} Note that while $\alpha$ is not guaranteed to be a prime, we may still use it as though it were a prime and $p_n$ together to bound $|p_n - \alpha|$ asymptotically. \begin{theorem}
    The gap $|p_n - \alpha|$ is $O(\sqrt{n} \cdot (\log ^{7/2} n))$ where $\pi(p_n) = n$ and $\li(\alpha) = n$. 
\end{theorem}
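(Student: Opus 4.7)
The plan is to combine the Schoenfeld-based corollary with the extended Cramér conjecture through a triangle inequality. First, I would apply the corollary at $x = \alpha$ to convert the hypothesis $\li(\alpha) = n$ into control on $\pi(\alpha)$: setting $\Delta = |\pi(\alpha) - n|$, we get
\[
    \Delta \;=\; |\pi(\alpha) - \li(\alpha)| \;\leq\; \tfrac{1}{8\pi}\sqrt{\alpha}\,\log \alpha.
\]
So $\alpha$ sits near $p_n$ in the combinatorial sense that at most $\Delta$ primes separate them.

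Next, I would locate $\alpha$ on the sequence of primes. Writing $\pi(\alpha) = n + \Delta'$ with $|\Delta'| = \Delta$, we have $p_{n+\Delta'} \leq \alpha < p_{n+\Delta'+1}$, so $|\alpha - p_{n+\Delta'}|$ is bounded by a single prime gap, which is $O((\log p_{n+\Delta'})^2)$ by Cramér's conjecture. By the triangle inequality,
\[
    |p_n - \alpha| \;\leq\; |p_n - p_{n+\Delta'}| + |p_{n+\Delta'} - \alpha|,
\]
and the extended Cramér conjecture stated in the paper gives $|p_n - p_{n+\Delta'}| = O(\Delta\,(\log p_{n+\Delta'})^2)$. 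The single-gap term is absorbed into this, so $|p_n - \alpha| = O(\Delta\,(\log p_{n+\Delta'})^2)$.

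Finally, I would substitute asymptotics. Since $\li(\alpha) = n$ and $\li(x) \sim x/\log x$, one has $\alpha \sim n\log n$, whence $\sqrt{\alpha}\,\log \alpha = O(\sqrt{n}\,(\log n)^{3/2})$ and $\log p_{n+\Delta'} = O(\log n)$ (because $p_{n+\Delta'}$ is also of order $n\log n$, $\Delta$ being vastly smaller than $n$). Multiplying,
\[
    \Delta\,(\log p_{n+\Delta'})^2 \;=\; O\!\left(\sqrt{n}\,(\log n)^{3/2} \cdot (\log n)^2\right) \;=\; O\!\left(\sqrt{n}\,(\log n)^{7/2}\right),
\]
which is the claimed bound.

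The main obstacle I expect is the bookkeeping around the sign of $\Delta'$: the extended Cramér bound is stated with a positive shift, so a brief argument (or a symmetric restatement) is needed to apply it in both directions around index $n$. Beyond that, the calculation is straightforward provided one justifies $\alpha \sim n\log n$ and verifies that the asymptotic equivalence for $\li^{-1}$ is tight enough to be absorbed into the $O(\cdot)$ — both of which follow from the first-order term of the expansion for $\li(x)$ given earlier.
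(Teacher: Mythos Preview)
Your proposal is correct and follows essentially the same route as the paper: apply Schoenfeld's bound at $x=\alpha$ to bound the index shift $\Delta$, feed that $\Delta$ into the extended Cram\'er conjecture, and then substitute $\alpha \sim n\log n$ via the Prime Number Theorem to arrive at $O(\sqrt{n}\,(\log n)^{7/2})$. The only difference is cosmetic: you explicitly locate $\alpha$ between consecutive primes and use a triangle inequality to handle the fact that $\alpha$ need not be prime, whereas the paper simply remarks that $\alpha$ may be treated ``as though it were a prime'' and applies the extended Cram\'er bound directly with $\max(\alpha,p_n)$ in place of $p_{n+\Delta}$; your version is the more careful of the two but not a different argument.
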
 \begin{proof}
    From Schoenfeld's bound, \begin{align}
        |\pi(\alpha)-\li(\alpha)| \leq \frac{1}{8 \pi} \cdot\sqrt{\alpha} \cdot \log \alpha
    \end{align} Additionally, for some $\Delta$ using Cramér's conjecture, we have, \begin{align}
        |\alpha - p_n| \in O(\Delta((\log \max(\alpha, p_n))^2)
    \end{align} Note that the bound gives a precise value for $\Delta$, explicitly $\Delta = \frac{1}{8 \pi} \cdot\sqrt{\alpha} \cdot \log \alpha$ and, without loss of generality, let $\max(\alpha, p_n) = \alpha$, then we have: \begin{align}
        |\alpha - p_n| \in O(\sqrt{\alpha} \cdot \log  ^3 \alpha)
    \end{align} Now, we use the Prime Number Theorem $\alpha \sim p_n \sim n \log n$, giving: \begin{align}
        |\alpha - p_n| \in O ( \sqrt{n \log n} \cdot \log  ^3(n \log n)) \implies O(\sqrt{n} \cdot (\log  ^{7/2} n))
    \end{align}
    We have our result, $O(\sqrt{n} \cdot (\log  ^{7/2} n))$. 
\end{proof} 

 The immediate consequence of this theorem is that attempting to sieve over the interval around $\alpha$ that contains $p_n$ necessarily beats the binary search algorithm, as given by the theorem in \textbf{Section 5}.

\section{An Improved Algorithm}

Following the discussion in the previous section, a consolidated algorithm to find $p_n$ is as follows: \begin{enumerate}
    \item Evaluate $\li(x)$ $\log _2 n$ many times over the interval given by Dusart's bounds to find $\alpha$ such that $\li(\alpha) = n$. This is done in $O(\log n)$ time. 
    
    \item Define an interval $[L, R] = [\alpha - O(\sqrt{n} \cdot (\log ^{7/2} n)), \alpha + O(\sqrt{n} \cdot (\log ^{7/2} n))]$. Sieving over this interval takes $O(\sqrt{n} \cdot (\log ^{7/2} n)) \cdot \log \log n)$ time. 

    \item With the list of primes in the interval, evaluate $\pi(p')$ once for any $p'$ in the list and simply count up / down to the $n$th prime. This is done in ${O}(\sqrt{p'} \cdot (\log p')^{5/2}) \implies {O}(\sqrt{n \log n} \cdot (\log (n \log n))^{5/2}) \implies O(\sqrt{n} \cdot \log ^3 n)$ time. 
\end{enumerate} Overall, this algorithm runs in: \begin{align}
    O(\log n + \sqrt{n} \cdot (\log ^{7/2} n)) \cdot \log \log n + \sqrt{n} \cdot \log ^3 n) \implies O(\sqrt{n} \cdot (\log ^{7/2} n) \cdot \log \log n)    
\end{align} This is faster than binary search with multiple $\pi(x)$ evaluations, which runs in $O(\sqrt{n} \cdot \log  ^4 n)$ time. 

\section{Conclusion}

In this note, we analyzed multiple algorithms for computing the $n$th prime $p_n$ and established concrete asymptotic bounds for their performance. Using existing results on the complexity of evaluating the prime-counting function $\pi(x)$, we showed that the binary search approach computes $p_n$ in $O(\sqrt{n} \cdot (\log n)^4)$ time. We further demonstrated that even the fastest known sublinear or segmented sieves cannot asymptotically outperform this method with current bounds for $p_n$. 

 Assuming the Riemann Hypothesis and Cramér’s conjecture, we constructed a tighter interval around the approximation $\alpha$ satisfying $\li(\alpha) = n$. This led to an improved sieve-based algorithm running in $O(\sqrt{n} \cdot (\log  ^{7/2} n) \cdot \log \log n)$ time, which asymptotically beats the binary search approach. 

 While the improvement is conditional and primarily theoretical, it suggests that future refinements in prime gap estimates or unconditional bounds on $p_n$ could lead to provably faster methods for computing primes.

\section*{Acknowledgements}
We would like to thank Prof.\ Eric Bach and Prof.\ Jonathan Sorenson for insightful discussions and feedback during the preparation of this work. We also thank Prof.\ Rahul Chatterjee for his support and guidance. Finally, we'd like to thank Prof. Jeffrey Shallit for providing comments on the presentation of this work.

\end{document}